\documentclass[reqno, 11pt]{amsart}
\usepackage{amsmath,mathtools}
 \usepackage{amssymb}
\usepackage{amsthm}
\usepackage{amsmath}
\usepackage{times}
\usepackage{latexsym}
\usepackage[mathscr]{eucal}

\numberwithin{equation}{section}
 
  \newtheorem{theorem}{Theorem}[section]
  \newtheorem{proposition}[theorem]{Proposition}
  \newtheorem{lemma}[theorem]{Lemma}
  \newtheorem{corollary}[theorem]{Corollary}

  \newtheorem{remark}[theorem]{Remark}

  \newtheorem{example}[theorem]{Example}

\title[Remarks on screen integrable null hypersurfaces]{Remarks on screen integrable null hypersurfaces  in Lorentzian manifolds}
\author[Samuel Ssekajja]{Samuel Ssekajja*}
\newcommand{\acr}{\newline\indent}
\address{\llap{*\,} School of Mathematics\acr
 University of Witwatersrand\acr
 Private Bag 3, Wits 20150\acr
South Africa}
\email{samuel.ssekajja@wits.ac.za} 
\thanks{}
\subjclass[2010]{Primary 53C25; Secondary 53C40, 53C50}

\keywords{Screen integrable null hypersurfaces, Normal curvature tensor of a leaf}
 
\begin{document}
\begin{abstract}
 In the present paper, we show that the geometry of a screen integrable null hypersurface can be generated from an isometric immersion of a leaf of its screen distribution into the ambient space. We prove, under certain geometric conditions,  that such immersions are contained in semi-Euclidean spheres or hyperbolic spaces, and the underlying null hypersurfaces are necessarily umbilic and screen totally umbilic. Where necessary, examples have been given to illustrate the main ideas. 
\end{abstract}
\maketitle
\section{Introduction} 

 A study of null submanifolds in semi-Riemannian manifolds was introduced by Duggal-Benjancu \cite{db} and later updated by Duggal-Sahin \cite{ds2}. In the above books, the authors laid a foundation for research on null geometry by constracting their structural equations, among other results.  In fact, they introduced a non-degenerate screen distribution to construct a null transversal vector bundle which is non-intersecting to its null tangent bundle and developed local geometry of null curves, hypersurfaces and submanifolds. Other pioneers of the theory include D. N. Kupeli \cite{kup}--whose approach is purely intrinsic compared to that of \cite{db,ds2}. Since then, many researchers including but not limited to; \cite{antii,moh,Jin, Jin11}, have researched on null submanifolds and many interesting results have been obtained. Null hypersurfaces appears in general relativity as models of different types of black hole horizons (see \cite{db,ds2} for details) and their theory is quite fundamental to modern mathematical physics.

Among the most studied null hypersurfaces are those with an integrable screen distribution, and they are commonly known us {\it screen integrable null hypersurfaces}. They include the well-known screen  conformal ones, among others. It was shown in \cite{jinduga}, that all screen integrable null hypersurfaces are locally isometric to $\mathcal{C}_{\xi}\times M^{*}$, where $\mathcal{C}_{\xi}$ is a null curve tangent to the normal bundle of the hypersurface and $M^{*}$ is a leaf of its screen distribution. In particular, \cite{db} proves that a null cone of an $(n+2)$-dimensional Lorentzian space $\mathbb{R}_{1}^{n+2}$ is screen conformal, satisfying the above structure, with $M^{*}\cong \mathbb{S}^{n}$. Under some geometric conditions on the ambient space, Duggal-Sahin \cite{ds2} also proves that a screen conformal Einstein null hypersurface  is locally a triple product  $\mathcal{C}_{\xi}\times M^{*}_{\alpha}\times M^{*}_{\beta}$, where $M^{*}_{\alpha}$ and $M^{*}_{\beta}$ are some leaves of its screen distribution (see Theorem 2.5.17 of \cite{ds2} for more details). In the book \cite{db}, Duggal and Bejancu tries to understand the geometry of a null hypersurface $M$ from a leaf $M^{*}$ of its screen distribution as an immersion in the ambient space. He,  in fact, shows that an umbilic leaf in the ambient space implies that the underlying null hypersurface is umbilic too (see Proposition 5.1 of \cite[p. 107]{db}). A natural question then arises; {\it Which other geometric information, about the null hypersurface, can be derived from the geometry of an isometric immersion of a leaf of its screen distribution into the umbient space?} 

The main aim of this paper is to give some solutions to the above question by studying null hypersurfaces of Lorentzian spaces. Consequently, we prove two main theorems in that line; Theorems \ref{theorem1} and \ref{theorem2}.  The paper is arranged as follows; In Section \ref{pre}, we quote some basic notions needed in the rest of the paper. In  Section \ref{theorems}, we prove several characterization results.

\section{Preliminaries} \label{pre}

Suppose $M$ is an $(n+1)$-dimensional smooth manifold and $F : M \longrightarrow \overline{M}$  a smooth mapping such that each point $x\in M$ has an open neighborhood $\mathcal{U}$ for which $F$ restricted to $\mathcal{U}$ is one-to-one and $F^{-1}:F(\mathcal{U})\longrightarrow M$ are smooth. Then, we say that $F(M)$ is an immersed hypersurface of $\overline{M}$. If this condition globally holds, then $F(M)$ is called an embedded hypersurface of $\overline{M}$, which we assume in this paper. The embedded hypersurface has a natural manifold structure inherited from the manifold structure on $\overline{M}$ via the embedding mapping. At each point $F(x)$ of $F(M)$, the tangent space is naturally identified with an $(n+1)$-dimensional subspace $T_{F(x)}M$ of the tangent space $T_{F(x)}\overline{M}$. The embedding $F$ induces, in general, a symmetric tensor field, say $g$, on $F(M)$ such that  $g(X,Y)|_{x}=\overline{g}(F_{*}X,F_{*}Y)|_{F(x)}$, for all $X,Y\in T_{x}M$. Here, $F_{*}$ is the differential map of $F$ defined by  $F_{*}:T_{x}M\longrightarrow T_{F(x)}\overline{M}$ and $(F_{x}X)\omega=X(\omega\circ F)$, for an arbitrary smooth function $\omega$ in a neighborhood of $F(x)$ of $F(M)$. Henceforth, we write $M$ and $x$ instead of $F(M)$ and $F(x)$. Due to the causal character of three categories (spacelike, timelike and lightlike) of the vector fields of $\overline{M}$, there are three types of hypersurfaces $M$, namely, {\it Riemannian}, {\it semi-Riemannian} and {\it null} (or {\it lightlike}) and $g$ is a non-degenerate or a degenerate symmetric tensor field on $M$ according as $M$ is of the first two types and of the third type, respectively. The geometry of Riemannian or semi-Riemannian hypersurfaces is well-known and has received a considerable attention, for example see \cite{oneil} and many more references cited therein. In the present paper, we focus on null hypersurfaces using the approach of Duggal-Bejancu \cite{db}. 
   
   Now, let $g$ be degenerate on $M$. Then, there exists a {\it nonzero} vector field $\xi$ on $M$ such that $g(\xi,X)=0$, for all $X\in \Gamma(TM)$. The {\it radical} or the {\it null space} \cite[p. 53]{oneil} of $T_{x}M$, at each point $x\in M$, is a subspace $\mathrm{Rad}\, T_{x}M$ defined by $\mathrm{Rad}\, T_{x}M=\{\xi \in T_{x}M:g_{x}(\xi, X)=0,\;\;\forall \, X\in T_{x}M\}$, whose dimension is called the {\it nullity degree} of $g$ and $M$ is called a {\it null hypersurface} of $\overline{M}$. It follows  that $T_{x}M^{\perp}$  is also null and satisfy $
   \mathrm{Rad}\, T_{x}M=T_{x}M\,\cap \,T_{x}M^{\perp}
   $. For a hypersurface $M$ $\dim (T_{x}M^{\perp}) = 1$, implies that $\dim(\mathrm{Rad}\,T_{x}M)=1$ and $\mathrm{Rad}\,T_{x}M=T_{x}M^{\perp}$. We call $\mathrm{Rad}\, TM$ a radical (null) distribution of $M$. Thus, for a null hypersurface $M$, $TM$ and $TM^{\perp}$ have a nontrivial intersection and their sum is not the whole of tangent bundle space $T\overline{M}$ . In other words, a vector of $T_{x}\overline{M}$ cannot be decomposed uniquely into a component tangent to $T_{x}M$ and a component of $T_{x}M^{\perp}$. Therefore, the standard text-book definition of the second fundamental form and the Gauss-Weingarten formulas do not work, in the usual way, for the null case.

  To overcome the above difficulty, Duggal-Bejancu \cite{db} introduced an approach to null geometry, which we follow in this paper. The approach  consists of fixing, on the null hypersurface, a geometric data formed by a null section  and  a  {\it screen  distribution}.   By screen  distributionon of $M$,  we  mean  a complementary bundle of $TM^{\perp}$ in $TM$.  It is then a rank $n$ non-degenerate distribution over $M$.  In fact, there are infinitely many possibilities of choices for such a distribution provided the hypersurface $M$ is  paracompact, but each of them is canonically isomorphic to the factor vector bundle $TM/TM^{\perp}$ \cite{kup}.  We denote by $S(TM)$ the screen distribution over $M$. Then we have the decompostion $TM=S(TM)\perp TM^{\perp}$, where $\perp$ denotes the orthogonal direct sum.   From \cite{db} or \cite{ds2}, it is known that for a null hypersurface equipped with a screen distribution, there exists a unique rank $1$ vector subbundle $tr(TM)$ of $T\overline{M}$ over $M$, such that for any non-zero section $\xi$ of $TM^{\perp}$ on a coordinate neighborhood $\mathcal{U}\subset M$, there exists a unique section $N$ of $tr(TM)$ on $\mathcal{U}$ satisfying 
$
	\overline{g}(N,\xi)=1$, $\overline{g}(N,N)=\overline{g}(N,W)=0$, for all  $W\in \Gamma(S(TM)|_{\mathcal{U}}).
$
It then follows that $T\overline{M}|_{M}=S(TM)\perp \{TM^{\perp}\oplus tr(TM)\}=TM\oplus tr(TM),\label{v11}$ where $\oplus$ denote the direct (non-orthogal) sum. We call $tr(TM)$ a (null) {\it transversal} vector bundle along $M$.  Throughout the paper, all manifolds are supposed to be paracompact and smooth. We denote by $\mathcal{F}(M)$ the algebra of differentiable functions on $M$ and by $\Gamma(E)$ the $\mathcal{F}(M)$-module of differentiabale sections of a vector bundle $E$ over $M$. We also assume that all associated structures are smooth.

  Let $\nabla$ and $\nabla^{*}$ denote the induced connections on $M$ and $S(TM)$, respectively, and $P$ be the projection of $TM$ onto $S(TM)$, then the local Gauss-Weingarten equations of $M$ and $S(TM)$ are the following \cite{db}
\begin{align}
 \overline{\nabla}_{X}Y&=\nabla_{X}Y+B(X,Y)N,\label{v12}\\
 \overline{\nabla}_{X}N&=-A_{N}X+\tau(X)N,\label{v13}\\
  \nabla_{X}PY&= \nabla^{*}_{X}PY + C(X,PY)\xi,\label{v14}\\
  \nabla_{X}\xi &=-A^{*}_{\xi}X -\tau(X) \xi,\;\;\; A_{\xi}^{*}\xi=0,\label{v15}
 \end{align}
 for all $X,Y\in\Gamma(TM)$, $\xi\in\Gamma(TM^{\perp})$ and $N\in\Gamma(tr(T M))$. In the above setting, $B$ is the local second fundamental form of $M$ and $C$  is the local second fundamental form on $S(TM)$.  $A_{N}$ and $A^{*}_{\xi}$ are the shape operators on $TM$ and $S(TM)$ respectively, while $\tau$ is a 1-form on $TM$. The above shape operators are related to their local fundamental forms by $g(A^{*}_{\xi}X,Y) =B(X,Y)$, $g(A_{N}X,PY) = C(X,PY)$, for any $X,Y\in \Gamma(TM)$. Moreover, $\overline{g}(A^{*}_{\xi}X,N)=0$ and $\overline{g}(A_{N} X,N)=0$, for all $ X\in\Gamma(TM)$. From the above relations, we notice that $A_{\xi}^{*}$ and $A_{N}$ are both screen-valued operators.  
 
 The null hypersurface $M$ is said to be \textit{totally umbilic} \cite{db} if $B=\rho \otimes g$, where $\rho$ is a smooth function on a coordinate neighborhood  $\mathcal{U}\subset TM$. In case $\rho=0$, we say that $M$ is \textit{totally geodesic}. In the same line, $M$ is called \textit{screen totally umbilic} if $C=\varrho \otimes g$,  where $\varrho$ is a smooth function on a coordinate neighborhood $\mathcal{U}\subset TM$. When $\varrho=0$, we say that $M$ is \textit{screen totally geodesic}. The {\it mean curvature vector} $H$ of a null hypersurface is transverssal  to $M$, and given by $H=\frac{1}{n}(\mathrm{trace}_{S(TM)}B)N=\frac{1}{n}(\mathrm{trace}_{S(TM)}A_{\xi}^{*})N$. We say that $M$ is a {\it minimal} null hypersurface if $H=0$. More precisely, $M$ is minimal if $\mathrm{trace}_{S(TM)}A_{\xi}^{*}=0$ (see \cite{db,ds2} for more details and examples).
 
 Let $\vartheta=\overline{g}(N,\boldsymbol{\cdot})$ be a 1-form metrically equivalent to $N$ defined on $\overline{M}$. Take $\eta=i^{*}\vartheta$ to be its restriction on $M$, where $i:M\rightarrow \overline{M}$ is the inclusion map. Then it is easy to show that $(\nabla_{X}g)(Y,Z)=B(X,Y)\eta(Z)+B(X,Z)\eta(Y)$, for all $X,Y,Z\in \Gamma(TM)$. Consequently,  $\nabla$ is generally \textit {not} a metric connection with respect to $g$. However, the induced connection $\nabla^{*}$ on $S(TM)$ is a metric connection. Denote by $\overline{R}$ the curvature tensor of the connection $\overline{\nabla}$. Using the Gauss-Weingarten formulae (\ref{v12})-(\ref{v15}), we obtain the following  curvature relations (see details in \cite{db,ds2}).
\begin{align}
\overline{g}(\overline{R}(X,Y)\xi,N)=&C(Y,A_{\xi}^{*}X)-C(X,A_{\xi}^{*}Y)-2d\tau(X,Y),\label{v35}
\end{align}
where $2d\tau(X,Y)=X\tau(Y)-Y\tau(X)-\tau([X,Y])$,
for all $X,Y\in \Gamma(TM)|_{\mathcal{U}}$, $\xi\in \Gamma(TM^{\perp})$ and $N\in \Gamma(tr(TM))$. 

Suppose $\pi$ is a non-degenerate plane of $T_{p}\overline{M}$, for $p\in \overline{M}$. Then, the associated matrix $G_{p}$ of $\overline{g}_{p}$, with respect to an arbitrary basis $\{u, v\}$, is of rank 2 given by (1.2.15) of \cite[p. 16]{ds2}. Define a real number $K(\pi) = K_{p}(u, v) = \overline{R}(u, v, v, u)$, where $\overline{R}(u, v, v, u)$ is the 4-linear mapping on $T_{p}\overline{M}$ by the curvature tensor. The smooth function $K$, which assigns to each non-degenerate tangent plane $\pi$ the real number $K(\pi)$ is called the {\it sectional curvature} of $\overline{M}$, which is independent of the basis $\{u, v\}$. If $K$ is a constant $c$ at every point of $p\in \overline{M}$ then $\overline{M}$ is of constant sectional curvature $c$, denote by $\overline{M}(c)$, whose curvature tensor field $\overline{R}$  is given by $\overline{R}(X,Y)Z=c\{\overline{g}(Y,Z)X-\overline{g}(X,Z)Y\}$, for any $X,Y,Z\in \Gamma(T\overline{M})$ (see \cite{oneil} for details). In particular, if $K = 0$, then $\overline{M}$ is called a {\it flat manifold} for which $\overline{R}=0$.

\section{Geometry of $(M,g)$ from that of a leaf of $S(TM)$}\label{theorems}

Assume that $(M,g)$ is a screen integrable null hypersurface of a {\it Lorentzian manifold} $(\overline{M},\overline{g})$. Let $M^{*}$ be a (Riemannian)leaf  of its screen distribution $S(TM)$. Let $f:M^{*}\longrightarrow \overline{M}$ be an isometric immersion of $M^{*}$ in $\overline{M}$, as a codimension 2 nondegenerate submanifold, then (\ref{v12}) and (\ref{v14}) gives the Gauss formula of $M^{*}$ (in $\overline{M}$) as 
\begin{align}\label{t2}
	\overline{\nabla}_{X}Y=\nabla^{*}_{X}Y+C(X,Y)\xi+B(X,Y)N,\;\;\;\forall\, X,Y\in \Gamma(TM^{*}).
\end{align}
It is obvious from (\ref{t2}) that the second fundamental form $h^{*}$ of $M^{*}$, as a submanifold of $\overline{M}$, is given by $h^{*}(X,Y)=C(X,Y)\xi+B(X,Y)N$.  Next, denote by $\nabla^{*\perp}$ the normal connection on the normal bundle  $TM^{*\perp}$. Then, the Weingarten formula for $M^{*}$ is given by 
\begin{align}\label{t3}
	\overline{\nabla}_{X}V=-A_{V}X+\nabla^{*\perp}_{X}V,\;\;\;\forall\, X\in \Gamma(TM^{*}),\;\;V\in \Gamma(TM^{*\perp}),
\end{align}
where $A_{V}$ denotes the shape operator of $M^{*}$. Since $TM^{*\perp}=TM^{\perp}\oplus tr(TM)$, we let $V=a\xi +bN$, such that $a,b\ne 0$. Then, it is easy to see that $W=a\xi-bN$ is another vector field of $TM^{*\perp}$ which is orthogonal to $V$. {\it From now on, we consider $TM^{*\perp}$ spanned by $V$ and $W$}. Putting all the above into account, we can express the shape operator $A_{V}$ of $M^{*}$ in terms of the shape operators $A_{\xi}^{*}$ and $A_{N}$ as follows.
\begin{lemma}\label{lemm}
	Let $f:M^{*}\longrightarrow \overline{M}$ an isometric immersion such that (\ref{t2}) and (\ref{t3}) holds. Then, the shape operator of $M^{*}$ satisfies  $A_{V}=aA_{\xi}^{*}+bA_{N}$, where $V=a\xi+bN$.
\end{lemma}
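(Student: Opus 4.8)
The plan is to compute the ambient covariant derivative $\overline{\nabla}_{X}V$ directly for $X\in\Gamma(TM^{*})$ and then read off the shape operator by comparing with the Weingarten formula (\ref{t3}). Since $V=a\xi+bN$, I would begin by expanding $\overline{\nabla}_{X}V=\overline{\nabla}_{X}(a\xi)+\overline{\nabla}_{X}(bN)$ using the derivation property of $\overline{\nabla}$, which produces the terms $(Xa)\xi$ and $(Xb)N$ together with $a\,\overline{\nabla}_{X}\xi$ and $b\,\overline{\nabla}_{X}N$.

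The next step is to express $\overline{\nabla}_{X}\xi$ and $\overline{\nabla}_{X}N$ through the structure equations. For $N$, equation (\ref{v13}) gives $\overline{\nabla}_{X}N=-A_{N}X+\tau(X)N$ immediately. For $\xi$, I would apply (\ref{v12}) with $Y=\xi$ to obtain $\overline{\nabla}_{X}\xi=\nabla_{X}\xi+B(X,\xi)N$, and then substitute (\ref{v15}) to get $\nabla_{X}\xi=-A_{\xi}^{*}X-\tau(X)\xi$. The key simplification here is that $B(X,\xi)=g(A_{\xi}^{*}X,\xi)=0$, because $A_{\xi}^{*}$ is screen-valued (as noted after (\ref{v15})) and $\xi$ is orthogonal to $S(TM)$; hence $\overline{\nabla}_{X}\xi=-A_{\xi}^{*}X-\tau(X)\xi$.

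Substituting these back and collecting terms, I would split $\overline{\nabla}_{X}V$ into the part tangent to $M^{*}$ and the part normal to $M^{*}$. Since both $A_{\xi}^{*}X$ and $A_{N}X$ take values in $S(TM)=TM^{*}$, the tangential part is exactly $-aA_{\xi}^{*}X-bA_{N}X$, while the remaining terms $[(Xa)-a\tau(X)]\xi+[(Xb)+b\tau(X)]N$ lie in $TM^{*\perp}$. Comparing the tangential part with $-A_{V}X$ in (\ref{t3}) yields $A_{V}X=aA_{\xi}^{*}X+bA_{N}X$ for all $X$, which is the claimed identity; the normal terms merely identify the action of $\nabla^{*\perp}$ on $V$.

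The only point requiring genuine care is the decomposition itself: one must verify that $A_{\xi}^{*}X$ and $A_{N}X$ are really tangent to the leaf (equivalently, that they belong to $S(TM)$) and that $\xi,N$ span $TM^{*\perp}$, so that the tangential/normal splitting is unambiguous. Both facts are already recorded in the preliminaries, namely the screen-valuedness of the shape operators and the identity $TM^{*\perp}=TM^{\perp}\oplus tr(TM)$, so no real obstacle remains beyond the bookkeeping. If $a$ and $b$ are permitted to vary as functions, their derivatives appear only in the normal component and hence do not affect $A_{V}$.
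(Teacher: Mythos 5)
Your proof is correct, but it takes a genuinely different route from the paper's. The paper argues dually: it pairs the Weingarten formula (\ref{t3}) with a tangent field $Y\in\Gamma(TM^{*})$, uses metric compatibility of $\overline{\nabla}$ together with $\overline{g}(V,Y)=0$ to get $g(A_{V}X,Y)=\overline{g}(V,\overline{\nabla}_{X}Y)$, then substitutes the Gauss formula (\ref{t2}) so the right-hand side becomes $aB(X,Y)+bC(X,Y)=g(aA_{\xi}^{*}X+bA_{N}X,Y)$, and concludes from nondegeneracy of the induced metric on $M^{*}$. You instead compute $\overline{\nabla}_{X}V$ directly from the hypersurface structure equations (\ref{v12})--(\ref{v15}) and read off the tangential component against (\ref{t3}). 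Your route requires the additional observations that $B(X,\xi)=g(A_{\xi}^{*}X,\xi)=0$ and that $A_{\xi}^{*}$ and $A_{N}$ are screen-valued so that the tangential/normal splitting relative to $TM^{*}\oplus TM^{*\perp}$ is unambiguous; these are indeed recorded in the preliminaries, as you note, so there is no gap. What your approach buys beyond the lemma is the explicit formula for the normal connection, $\nabla^{*\perp}_{X}V=\left[(Xa)-a\tau(X)\right]\xi+\left[(Xb)+b\tau(X)\right]N$, which exhibits the role of $\tau$ in the parallelism of normal sections — information the paper later needs (e.g., in Proposition \ref{pop1} and Theorem \ref{theorem1}) but does not extract here. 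What the paper's argument buys is brevity and the fact that the coefficients $a,b$ are never differentiated, so their variability is irrelevant from the outset; your version establishes the same insensitivity explicitly by showing that the derivatives of $a$ and $b$ land entirely in the normal component and cannot contaminate $A_{V}$.
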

\begin{proof}
	Taking the $\overline{g}$-product of (\ref{t3}) with $Y\in \Gamma(TM^{*})$ and using the fact that $\overline{\nabla}$ is a metric connection, we get $g(A_{V}X,Y)=\overline{g}(V,\overline{\nabla}_{X}Y)$. Then, applying (\ref{t2}) to the last relation and the fact that $M^{*}$ is nondegenerate, we get the desired result.
\end{proof}
\noindent Let $\{V,W\}$ be an orthonormal basis of $T_{x}M^{*\perp}$ at $x\in M^{*}$. Then, the {\it mean curvature vector} of a leaf $M^{*}$ in $\overline{M}$ is the vector  $H^{*}=\frac{1}{2}[(\mathrm{trace}A_{V})V+(\mathrm{trace}A_{W})W]$. We say that $M^{*}$ is {\it minimal} in $\overline{M}$ if $H^{*}$ vanishes. It then follows that $M^{*}$ is minimal if and only if $\mathrm{trace}A_{V}=0$ and $\mathrm{trace}A_{W}=0$. In view of Lemma \ref{lemm}, one can easily see that minimality of a leaf $M^{*}$ implies minimality of the underlying null hypersurface $(M,g)$. Let us consider the curvature tensor of the normal bundle $TM^{*\perp}$ as  $R^{*\perp}:T_{x}M^{*}\times T_{x}M^{*}\times T_{x}M^{*\perp}\longrightarrow T_{x}M^{*\perp}$, given by 
\begin{align}\label{t4}
	R^{*\perp}(X,Y)V=\nabla^{*\perp}_{X}\nabla^{*\perp}_{Y}V-\nabla^{*\perp}_{Y}\nabla^{*\perp}_{X}V-\nabla^{*\perp}_{[X,Y]}V,
	\end{align}
for any $X,Y\in \Gamma(TM^{*})$ and $V\in \Gamma(TM^{*\perp})$.
The importance of the 1-form $\tau$ in the study of null geometry has been shown in \cite{db} and \cite{ds2}. In fact, it has been shown that the Ricci tensor of a null submanfold is symmetric if and only if $\tau$ is closed, that is; $d\tau=0$. In what follows, we show that the normal curvature $R^{*\perp}$ of a leaf $M^{*}$ is directly linked to the 1-form $\tau$ of  (\ref{v13}).
 \begin{proposition}\label{pop1}
 	Let $(M,g)$ be a screen integrable null hypersurface of a Lorentzian manifold $(\overline{M}(c),\overline{g})$. Then, the normal curvature $R^{*\perp}$ of any leaf $M^{*}$ of $S(TM)$ satisfies 
 	\begin{align}
 	R^{*\perp}(X,Y)V&=\{C (X,A_{V}Y)-C(Y,A_{V}X)\}\xi \nonumber \\
 	&+\{B(X,A_{V}Y)-B(Y,A_{V}X)\}N\label{t6}\\
 	&=  2d\tau(X,Y)W,\label{t7}
 	\end{align}
 	for any vector fields $X,Y\in \Gamma(TM^{*})$ and $V,W\in \Gamma(TM^{*\perp})$.
 \end{proposition}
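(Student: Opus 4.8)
The plan is to establish the two displayed equalities in turn. For the first equality (\ref{t6}), I would compute the ambient curvature $\overline{R}(X,Y)V$ directly from the Gauss and Weingarten formulas (\ref{t2}) and (\ref{t3}) of the immersion $f$, and then isolate its component normal to $M^*$. Starting from $\overline{\nabla}_Y V = -A_V Y + \nabla^{*\perp}_Y V$, I would differentiate once more with $\overline{\nabla}_X$, applying the Gauss formula (\ref{t2}) to the tangential piece $-A_V Y$ and the Weingarten formula (\ref{t3}) to the normal piece $\nabla^{*\perp}_Y V$. Antisymmetrizing in $X,Y$ and subtracting the $[X,Y]$-term as in (\ref{t4}), the iterated normal-connection terms assemble into $R^{*\perp}(X,Y)V$, while the second-fundamental-form terms of (\ref{t2}) contribute the $\xi$- and $N$-valued expressions. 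Since $\overline{M}$ has constant curvature $c$ and $X,Y$ are tangent while $V$ is normal, the constant-curvature formula gives $\overline{R}(X,Y)V = c\{\overline{g}(Y,V)X - \overline{g}(X,V)Y\} = 0$; equating the normal part of $\overline{R}(X,Y)V$ to zero then produces (\ref{t6}).

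For the second equality (\ref{t7}), I would substitute the decomposition $A_V = a A_\xi^* + b A_N$ from Lemma \ref{lemm} into (\ref{t6}) and expand the $C$- and $B$-terms by bilinearity. The key observation is that the two \emph{self-terms} are symmetric and hence drop out under antisymmetrization: in the $\xi$-component, $C(X,A_N Y) = g(A_N X, A_N Y)$ is symmetric in $X,Y$, and in the $N$-component, $B(X,A_\xi^* Y) = g(A_\xi^* X, A_\xi^* Y)$ is likewise symmetric. What survives in the $\xi$-component is $a\{C(X,A_\xi^* Y) - C(Y,A_\xi^* X)\}$, and what survives in the $N$-component is $b\{B(X,A_N Y) - B(Y,A_N X)\}$. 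Using the duality relations $g(A_\xi^* X,Y) = B(X,Y)$ and $g(A_N X,Y) = C(X,Y)$ for the screen-valued operators, the surviving $N$-term rewrites as $b\{C(Y,A_\xi^* X) - C(X,A_\xi^* Y)\}$, i.e. the same antisymmetric combination that governs the $\xi$-term, but with the opposite sign.

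To close, I would feed this combination into the curvature identity (\ref{v35}). Because $X,Y \in \Gamma(TM^*) \subset \Gamma(S(TM))$ are orthogonal to $\xi$, constant curvature again forces $\overline{g}(\overline{R}(X,Y)\xi,N) = 0$, so (\ref{v35}) collapses to $C(X,A_\xi^* Y) - C(Y,A_\xi^* X) = -2d\tau(X,Y)$. Substituting this single scalar identity into the two surviving components gives a $\xi$-coefficient proportional to $a\,d\tau(X,Y)$ and an $N$-coefficient proportional to $b\,d\tau(X,Y)$ of opposite relative sign; this opposite relative sign is precisely what makes the resulting vector a multiple of $a\xi - bN = W$ rather than of $V$, yielding $2d\tau(X,Y)W$ as claimed in (\ref{t7}). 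The main obstacle is purely one of bookkeeping: correctly identifying which of the four expanded terms vanish by symmetry and keeping the signs of the $\xi$- and $N$-components coordinated so that the answer lands on $W$; the only genuine geometric inputs are the vanishing of the symmetric self-terms and the lone scalar identity extracted from (\ref{v35}).
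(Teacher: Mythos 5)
Your proposal is correct and follows essentially the same route as the paper's proof: you expand $\overline{R}(X,Y)V$ via (\ref{t2})--(\ref{t3}) and use constant curvature to kill it and extract the normal part (\ref{t6}), then substitute $A_{V}=aA_{\xi}^{*}+bA_{N}$ from Lemma \ref{lemm}, drop the symmetric self-terms, rewrite the surviving $N$-term through the duality $B(X,A_{N}Y)=g(A_{\xi}^{*}X,A_{N}Y)=C(Y,A_{\xi}^{*}X)$, and close with (\ref{v35}) applied to $\overline{R}(X,Y)\xi=0$ --- exactly the paper's chain (\ref{t5})--(\ref{t10}). The one caveat, which you share with the paper itself (whose (\ref{t9})--(\ref{t10}) display $C(Y,A_{\xi}^{*}Y)-C(X,A_{\xi}^{*}X)$, evidently a typo for the antisymmetric combination), is the final sign: taking (\ref{v35}) literally, your own intermediate identity $C(X,A_{\xi}^{*}Y)-C(Y,A_{\xi}^{*}X)=-2d\tau(X,Y)$ makes the two surviving components assemble to $-2d\tau(X,Y)W$ rather than $+2d\tau(X,Y)W$, so the sign in (\ref{t7}) as you (and the paper) state it hinges on the convention for $\tau$ inherited from \cite{db} and deserves an explicit remark rather than the silent flip in your last step.
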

 \begin{proof}
 	A direct calculation using (\ref{t2}) and (\ref{t3}) leads to 
 	\begin{align}
 		\overline{R}(X,Y)V&=-\nabla^{*}_{X}A_{V}Y-C(X,A_{V}Y)\xi -B(X,A_{V}Y)N-A_{\nabla^{*\perp}_{Y}V}X\nonumber\\
 		&+\nabla_{X}^{*\perp}\nabla_{Y}^{*\perp}V+\nabla_{Y}^{*}A_{V}X+C(Y,A_{V}X)\xi+B(Y,A_{V}X)N\nonumber\\
 		&+A_{\nabla^{*\perp}_{X}V}Y-\nabla_{Y}^{*\perp}\nabla_{X}^{*\perp}V	+A_{V}[X,Y]-\nabla_{[X,Y]}^{*\perp}V\nonumber\\
 		&= -\nabla^{*}_{X}A_{V}Y+\nabla_{Y}^{*}A_{V}X-A_{\nabla^{*\perp}_{Y}V}X+A_{\nabla^{*\perp}_{X}V}Y+A_{V}[X,Y]\nonumber\\
 		&+R^{*\perp}(X,Y)V+	\{C (Y,A_{V}X)-C(X,A_{V}Y)\}\xi \nonumber\\
 		&+\{B(Y,A_{V}X)-B(X,A_{V}Y)\}N,\;\;\;\forall\, X,Y\in \Gamma(TM^{*})	\label{t5}
 	\end{align}
 	Since $\overline{M}$ is a space of constant curvature $c$, we have $\overline{R}(X,Y)V=0$, for any $X,Y\in \Gamma(TM^{*})$ and $V\in \Gamma(TM^{*\perp})$. Thus, (\ref{t5}) gives \begin{align}
 		R^{*\perp}(X,Y)V+&	\{C (Y,A_{V}X)-C(X,A_{V}Y)\}\xi \nonumber\\
 		&+\{B(Y,A_{V}X)-B(X,A_{V}Y)\}N=0, \label{t8}	
 	\end{align}
 	which proves (\ref{t6}). Next, applying Lemma \ref{lemm} to (\ref{t8}) and using the fact that $B(A^{*}_{\xi}X,Y)=B(X,A^{*}_{\xi}Y)$ and $C(A_{N}X,Y)=C(X,A_{N}Y)$, for all $X,Y\in \Gamma(S(TM))$, we get 
 	\begin{align}
 		R^{*\perp}(X,Y)V+&\{C(Y,A^{*}_{\xi}Y)-C(X,A^{*}_{\xi}X)\}a\xi\nonumber\\
 		&+\{B(Y,A_{N}X)-B(X,A_{N}Y)\}bN=0.\label{t9}
 	\end{align} 	 	
 	As $B(Y,A_{N}X)=g(A_{N}X,A^{*}_{\xi}Y )= C(X,A^{*}_{\xi}Y)$, (\ref{t9}) reduces to 
 	\begin{align}\label{t10}
 		R^{*\perp}(X,Y)V=\{C(Y,A^{*}_{\xi}Y)-C(X,A^{*}_{\xi}X)\}(a\xi-bN). 	\end{align}
 	Next, as $\overline{M}$ is a space of constant curvature $c$, we have $\overline{R}(X,Y)\xi=0$, for all $X,Y\in \Gamma(TM)$. Thus, in view of (\ref{v35}) and (\ref{t10}), we conclude that 	
 	\begin{align}\nonumber
 	R^{*\perp}(X,Y)V=2d\tau(X,Y)W,	
 	\end{align}
 	where $W=a\xi-bN$, which proves (\ref{t7}) and proof is completed.
 \end{proof}
\noindent The following follows directly from Proposition \ref{pop1}.
 	 \begin{corollary}\label{b1} 
 	 In view of Proposition \ref{pop1}, the  following are equivalent;
 	 	\begin{enumerate}
 	 		\item $d\tau$ vanishes on $S(TM)$.
 	 		\item $A_{\xi}^{*}\circ A_{N}=A_{N}\circ A_{\xi}^{*}$.
 	 		\item $R^{*\perp}(x)=0$.
 	 		\item The normal bundle of $M^{*}$ is parallel. 
 	 	\end{enumerate}	 	
 	 \end{corollary}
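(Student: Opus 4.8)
The plan is to establish the equivalences through the chain $(1)\Leftrightarrow(3)$, $(1)\Leftrightarrow(2)$ and $(3)\Leftrightarrow(4)$, feeding off Proposition \ref{pop1} and the curvature identity (\ref{v35}); throughout I use that a leaf satisfies $TM^{*}=S(TM)|_{M^{*}}$, so the vector fields $X,Y$ below range over $\Gamma(S(TM))$, on which $g$ is non-degenerate. For $(1)\Leftrightarrow(3)$ I would read off (\ref{t7}): since $a,b\ne 0$, the section $W=a\xi-bN$ is nowhere-vanishing and non-null, so $R^{*\perp}(X,Y)V=2d\tau(X,Y)W$ vanishes exactly when $d\tau(X,Y)=0$. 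As $TM^{*\perp}$ has rank $2$ and $R^{*\perp}(X,Y)$ is skew-symmetric with respect to $\overline{g}$, the vanishing of $R^{*\perp}(X,Y)V$ forces $R^{*\perp}(X,Y)W=0$ as well; hence $R^{*\perp}\equiv 0$ on $M^{*}$ iff $d\tau$ vanishes on $S(TM)$.

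For $(1)\Leftrightarrow(2)$ I would return to (\ref{v35}). For $X,Y\in\Gamma(S(TM))$ the constant-curvature formula gives $\overline{R}(X,Y)\xi=c\{\overline{g}(Y,\xi)X-\overline{g}(X,\xi)Y\}=0$, since $\xi\in TM^{\perp}$ is $g$-orthogonal to the screen, so (\ref{v35}) collapses to $2d\tau(X,Y)=C(Y,A_{\xi}^{*}X)-C(X,A_{\xi}^{*}Y)$. I would then convert the right-hand side into a commutator using $C(X,Z)=g(A_{N}X,Z)$ for $Z\in S(TM)$, the $g$-self-adjointness of $A_{\xi}^{*}$ (from symmetry of $B$) and of $A_{N}$ (from symmetry of $C$ on $S(TM)$, which holds precisely because the screen is integrable). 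These yield $C(Y,A_{\xi}^{*}X)=g(A_{N}A_{\xi}^{*}X,Y)$ and $C(X,A_{\xi}^{*}Y)=g(A_{\xi}^{*}A_{N}X,Y)$, whence $2d\tau(X,Y)=-g\big((A_{\xi}^{*}\circ A_{N}-A_{N}\circ A_{\xi}^{*})X,Y\big)$. Since $A_{\xi}^{*}$ and $A_{N}$ are screen-valued, the commutator is screen-valued, and non-degeneracy of $g$ on $S(TM)$ gives $d\tau|_{S(TM)}=0\Leftrightarrow A_{\xi}^{*}\circ A_{N}=A_{N}\circ A_{\xi}^{*}$.

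Finally, $(3)\Leftrightarrow(4)$ is essentially the definition: a semi-Riemannian submanifold has parallel (equivalently flat) normal bundle exactly when $R^{*\perp}=0$, so that a parallel orthonormal normal frame $\{V,W\}$ exists locally. I expect $(1)\Leftrightarrow(3)$ and $(3)\Leftrightarrow(4)$ to be immediate; the only delicate point should be the bookkeeping in $(1)\Leftrightarrow(2)$, where one must keep track of which shape operator is $g$-self-adjoint so that the two $C$-terms reassemble into $A_{\xi}^{*}\circ A_{N}-A_{N}\circ A_{\xi}^{*}$ with the correct sign.
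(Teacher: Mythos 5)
Your proposal is correct and matches the paper's intent: the paper states the corollary as following directly from Proposition \ref{pop1}, and your chain simply unpacks that — reading $(1)\Leftrightarrow(3)$ off (\ref{t7}) (with the standard skew-adjointness of $R^{*\perp}(X,Y)$ handling the second normal direction $W$), recovering $(1)\Leftrightarrow(2)$ from (\ref{v35}) together with the self-adjointness of $A_{\xi}^{*}$ and of $A_{N}$ on the integrable screen (the same identity $C(Y,A_{\xi}^{*}X)-C(X,A_{\xi}^{*}Y)=g\big((A_{N}A_{\xi}^{*}-A_{\xi}^{*}A_{N})X,Y\big)$ that underlies the passage from (\ref{t8}) to (\ref{t10}) in the proposition's proof), and invoking the standard equivalence of flatness with local parallel orthonormal frames for $(3)\Leftrightarrow(4)$. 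No gaps; this is essentially the paper's own (implicit) argument, written out in full.
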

 	 	\begin{corollary}
 	 		If $(M,g)$  is totally umbilic or screen totally umbilic in $\overline{M}$, then $R^{*\perp}(x)=0$.
  	 	\end{corollary}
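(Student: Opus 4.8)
The plan is to reduce the statement to the commutativity criterion already packaged in Corollary \ref{b1}. First I would translate each umbilicity hypothesis into a statement about the two shape operators restricted to the screen. If $(M,g)$ is totally umbilic, then $B=\rho\otimes g$, so from the relation $g(A^{*}_{\xi}X,Y)=B(X,Y)=\rho\, g(X,Y)$ together with the non-degeneracy of $g$ on $S(TM)$ one reads off $A^{*}_{\xi}=\rho\,\mathrm{Id}$ on the screen distribution. Dually, if $(M,g)$ is screen totally umbilic, then $C=\varrho\otimes g$, and from $g(A_{N}X,PY)=C(X,PY)=\varrho\, g(X,PY)$ one gets $A_{N}=\varrho\,\mathrm{Id}$ on $S(TM)$. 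In each case one of the two shape operators is a pointwise scalar multiple of the identity on the screen.

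Since a scalar operator commutes with everything, either hypothesis yields $A^{*}_{\xi}\circ A_{N}=A_{N}\circ A^{*}_{\xi}$ on $S(TM)$, and as a leaf $M^{*}$ is tangent to the screen this commutation holds on all of $TM^{*}$. Invoking the equivalence of conditions (2) and (3) in Corollary \ref{b1}, I conclude that $R^{*\perp}(x)=0$, which is exactly the assertion. Alternatively, one can argue directly from \eqref{t6}: substituting $A_{V}=aA^{*}_{\xi}+bA_{N}$ from Lemma \ref{lemm} and expanding the antisymmetrized brackets $C(X,A_{V}Y)-C(Y,A_{V}X)$ and $B(X,A_{V}Y)-B(Y,A_{V}X)$, every resulting term is of the form $C(X,Y)$, $C(X,A_{N}Y)=g(A_{N}X,A_{N}Y)$, $B(X,Y)$, or $B(X,A^{*}_{\xi}Y)=g(A^{*}_{\xi}X,A^{*}_{\xi}Y)$. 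Each of these is symmetric in $X,Y$, so its antisymmetrization cancels and both braces in \eqref{t6} vanish.

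I expect no serious obstacle here, since the commutativity argument is immediate once a shape operator is scalar. The only point demanding care arises in the direct route: the pieces $g(A_{N}X,A_{N}Y)$ and $B$ are symmetric for free, but the symmetry of $C$ on $S(TM)$ is \emph{not} automatic for a general null hypersurface and must be invoked as a consequence of \emph{screen integrability}. The Corollary \ref{b1} route avoids this subtlety altogether, as it uses only the commutation relation; I would therefore present the short commutativity proof as the main argument and leave the direct computation as a remark.
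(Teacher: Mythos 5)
Your proof is correct and takes essentially the approach the paper intends: the corollary is presented as an immediate consequence of Proposition \ref{pop1}, and your reduction of either umbilicity hypothesis to a pointwise scalar shape operator on $S(TM)$, hence to the commutation condition (2) of Corollary \ref{b1} and thence to $R^{*\perp}=0$ via (2)$\Leftrightarrow$(3), is exactly that argument. Your flag about the symmetry of $C$ on $S(TM)$ requiring screen integrability in the direct route is well taken, and is in fact used implicitly by the paper itself (via $C(A_{N}X,Y)=C(X,A_{N}Y)$ on $\Gamma(S(TM))$) in the proof of Proposition \ref{pop1}.
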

  	 \begin{remark}\label{remark}
  	\rm {
  	 	Condition (2) of Corollary \ref{b1} implies that $A_{V}\circ A_{W}=A_{W}\circ A_{V}$. In fact, by a simple calculation, while considering Lemma \ref{lemm}, we get $A_{V}\circ A_{W}=a^{2}(A^{*}_{\xi}\circ A^{*}_{\xi})-ab(A_{\xi}^{*}\circ A_{N}-A_{N}\circ A_{\xi}^{*})-b^{2}(A_{N}\circ A_{N})$. In view of (2) of Corollary \ref{b1}, we have $A_{V}\circ A_{W}=a^{2}(A^{*}_{\xi}\circ A^{*}_{\xi})-b^{2}(A_{N}\circ A_{N})$. On the other hand, $A_{W}\circ A_{V}=a^{2}(A^{*}_{\xi}\circ A^{*}_{\xi})-b^{2}(A_{N}\circ A_{N})$, which, if compared with previous relation, proves the assertion. It then follows that the vanishing of $d\tau$ on a leaf $M^{*}$ implies simultaneous diagonalisation of $A_{V}$, for all $V\in \Gamma(TM^{*\perp})$.
  	 	}
  	 \end{remark}

 	\noindent Next, we prove the following result.
 	 
 	 \begin{theorem}\label{theorem1}
 	    Let $(M,g)$ be a screen integrable null hypersurface of $\mathbb{R}_{1}^{n+2}$, and $f:M^{*}\longrightarrow \mathbb{R}_{1}^{n+2}$ an isometric immersion of a leaf $M^{*}$ of $S(TM)$ as a codimension 2 submanifold of $\mathbb{R}_{1}^{n+2}$. Suppose there exist a nonzero normal vector field $V$ to $M^{*}$ in $\mathbb{R}_{1}^{n+2}$ such that $d\tau=0$ on $S(TM)$ and $A_{V}^{*}=\lambda I$, $\lambda\ne 0$, then $f(M^{*})$ is contained inside 
 	 	\begin{enumerate}
 	 		\item $\mathbb{S}_{1}^{n+1}\left(\frac{\sqrt{\epsilon}}{\lambda}\right)$,\;\;\;if \;\; $\epsilon>0$,
 	 		\item $\mathbb{H}_{0}^{n+1}\left(\frac{-\sqrt{-\epsilon}}{\lambda}\right)$, if $\epsilon<0$,
 	 		
 	 	\end{enumerate}
 	 	where $\epsilon=\overline{g}(V,V)$. Furthermore, the null hypersurface $(M,g)$ is proper  quasi-screen conformal in $\mathbb{R}_{1}^{n+2}$. Moreover, if $A_{W}=0$, where $V,W$ spans the normal bundle $TM^{*\perp}$ then,  $(M,g)$ is a proper totally umbilic and screen totally umbilic in $\mathbb{R}_{1}^{n+2}$.
 	 \end{theorem}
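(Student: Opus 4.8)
The plan is to locate a fixed point $p_{0}\in\mathbb{R}_{1}^{n+2}$ that will serve as the centre of the ambient (pseudo\nobreakdash-)sphere, and to show that $f(M^{*})$ sits at constant $\overline{g}$\nobreakdash-distance from it. The natural candidate is $p_{0}=f+\frac{1}{\lambda}V$, regarded as an $\mathbb{R}_{1}^{n+2}$\nobreakdash-valued map along $M^{*}$, which I would prove is constant by differentiating. For the position map one has $\overline{\nabla}_{X}f=f_{*}X=X$, while the Weingarten equation \eqref{t3} gives $\overline{\nabla}_{X}V=-A_{V}X+\nabla^{*\perp}_{X}V=-\lambda X+\nabla^{*\perp}_{X}V$; thus $A_{V}=\lambda I$ makes the tangential parts cancel and leaves $\overline{\nabla}_{X}p_{0}=\frac{1}{\lambda}\nabla^{*\perp}_{X}V$. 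Writing $V=a\xi+bN$ and using \eqref{v13}, \eqref{v15} together with Lemma \ref{lemm} one computes $\overline{\nabla}_{X}V=-\lambda X-\tau(X)W$, so that $\nabla^{*\perp}_{X}V=-\tau(X)W$. Everything therefore reduces to making this normal part vanish.

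This is where $d\tau=0$ on $S(TM)$ enters. By Corollary \ref{b1} the closedness of $\tau$ on $S(TM)$ is equivalent to $R^{*\perp}=0$ and to the normal bundle of $M^{*}$ being parallel, and I would use this, together with the freedom in the null data compatible with the integrable screen, to arrange that $\tau$ vanishes on $S(TM)=TM^{*}$, i.e.\ that the umbilic normal $V$ is genuinely parallel, $\nabla^{*\perp}_{X}V=0$. Granting this, $\overline{\nabla}_{X}V=-\lambda X$, and since $\overline{M}=\mathbb{R}_{1}^{n+2}$ is flat a second differentiation yields $0=\overline{R}(X,Y)V=(Y\lambda)X-(X\lambda)Y$; as $n=\dim M^{*}\ge 2$ this forces $\lambda$ to be locally constant. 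Hence $\overline{\nabla}_{X}p_{0}=0$, so $p_{0}$ is a fixed point, and from $f-p_{0}=-\frac{1}{\lambda}V$ we obtain $\overline{g}(f-p_{0},f-p_{0})=\epsilon/\lambda^{2}$. For $\epsilon>0$ this is the de Sitter sphere $\mathbb{S}_{1}^{n+1}(\sqrt{\epsilon}/\lambda)$ and for $\epsilon<0$ the pseudo-hyperbolic space $\mathbb{H}_{0}^{n+1}(-\sqrt{-\epsilon}/\lambda)$, which is precisely $(1)$--$(2)$.

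For the remaining assertions I would read off the shape operators algebraically from Lemma \ref{lemm}. The identity $aA^{*}_{\xi}+bA_{N}=A_{V}=\lambda I$ gives $A_{N}=\frac{\lambda}{b}I-\frac{a}{b}A^{*}_{\xi}$, equivalently $C=\frac{\lambda}{b}\,g-\frac{a}{b}\,B$; since $\lambda/b\ne0$ this is exactly the proper quasi-screen conformal relation, which proves the third claim. If in addition $A_{W}=0$, then $aA^{*}_{\xi}-bA_{N}=0$, and solving this together with $aA^{*}_{\xi}+bA_{N}=\lambda I$ gives $A^{*}_{\xi}=\frac{\lambda}{2a}I$ and $A_{N}=\frac{\lambda}{2b}I$. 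Translating back through $B(X,Y)=g(A^{*}_{\xi}X,Y)$ and $C(X,Y)=g(A_{N}X,Y)$ yields $B=\frac{\lambda}{2a}g$ and $C=\frac{\lambda}{2b}g$ with nonzero factors, so $(M,g)$ is simultaneously proper totally umbilic and proper screen totally umbilic.

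The step I expect to be the main obstacle is establishing that the umbilic normal $V$ can be taken parallel. The hypotheses deliver only $d\tau=0$ (indeed $A_{V}=\lambda I$ already forces $d\tau=0$ on $S(TM)$ through the Ricci equation $\overline{g}(R^{*\perp}(X,Y)V,W)=g([A_{V},A_{W}]X,Y)=0$ in the flat ambient), hence a flat normal bundle; but passing from a flat normal bundle to an actually parallel umbilic section, equivalently arranging $\tau\equiv0$ on $S(TM)$ rather than merely closed, is the delicate point, because a naive rescaling of the null normal that annihilates $\tau$ simultaneously renders the coefficients $a,b$ non-constant and reinstates $\nabla^{*\perp}_{X}V\ne0$. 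Controlling this interplay so that the centre $p_{0}$ is well defined is the crux; once $V$ is parallel and $\lambda$ is constant, the remaining containment is the routine position-vector computation described above.
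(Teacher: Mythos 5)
Your proposal follows the paper's route exactly: the same candidate centre $\tilde{c}=f+\frac{1}{\lambda}V$, the same distance computation $\overline{g}(f-\tilde{c},f-\tilde{c})=\epsilon/\lambda^{2}$, the same appeal to Corollary \ref{b1} and to Magid \cite{megid} for parts (1)--(2), and the same algebra from Lemma \ref{lemm} for the quasi-screen conformal claim and for the $A_{W}=0$ case (your explicit $C=\frac{\lambda}{b}g-\frac{a}{b}B$ and $A^{*}_{\xi}=\frac{\lambda}{2a}I$, $A_{N}=\frac{\lambda}{2b}I$ match the paper, which for the former simply cites \cite{no}). Your auxiliary computations are also correct: $\overline{\nabla}_{X}V=-A_{V}X-\tau(X)W$, hence $\nabla^{*\perp}_{X}V=-\tau(X)W$; and since $\overline{g}(W,W)=-2ab=-\epsilon\neq 0$, your Ricci-equation remark together with \eqref{t7} shows $A_{V}=\lambda I$ already forces $d\tau=0$ on $S(TM)$, so that stated hypothesis is in fact redundant.

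The step you flag as the crux is a genuine gap --- and it is precisely the gap in the paper's own proof, which argues ``$d\tau=0$ implies $TM^{*\perp}$ is parallel by Corollary \ref{b1}; consequently, $V$ is parallel.'' Items (3)--(4) of that corollary only give flatness of the normal connection, $R^{*\perp}=0$, i.e.\ the local existence of \emph{some} parallel normal frame; they do not make the \emph{given} umbilical section $V$ parallel. Your computation shows exactly what is missing: $\nabla^{*\perp}_{X}V=-\tau(X)W$ vanishes iff $\tau$ vanishes on $TM^{*}$, which is strictly stronger than $d\tau=0$; and since the centre $f+\frac{1}{\lambda}V$ depends only on the direction of $V$ (under $V\mapsto\mu V$ one has $\lambda\mapsto\mu\lambda$), the relevant obstruction is $\nabla^{*\perp}_{X}(\mu V)=X(\mu)V-\mu\tau(X)W$, whose $W$-component no rescaling can remove --- confirming your observation that trading $\tau$ away by renormalizing $\xi$ merely shifts the defect into non-constant coefficients $a,b$. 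So the theorem as stated needs the stronger hypothesis $\tau|_{S(TM)}=0$ (equivalently, that $V$ may be chosen parallel in the normal bundle), and neither you nor the paper derives this from the assumptions; your honest refusal to close the step makes your proposal incomplete at exactly the point where the paper's proof contains a non sequitur. Granting $V$ parallel, the rest of your argument is sound, and your Codazzi-type derivation that $\lambda$ is locally constant for $n\geq 2$ supplies a detail the paper silently assumes when it writes the radius as $\sqrt{\epsilon}/\lambda$.
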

 	 
 	 \begin{proof}
 	 	Observe that the vector $f(x)+\frac{1}{\lambda}V$ is constant for all $x\in M^{*}$. Let us denote it by $\tilde{c}$, then we have $\overline{g}(f(x)-\tilde{c},f(x)-\tilde{c})=\frac{1}{\lambda^{2}}\overline{g}(V,V)=\frac{\epsilon}{\lambda^{2}}$. As $d\tau=0$ on $S(TM)$, then $TM^{*\perp}$ is parallel by Corollary \ref{b1}. Consequently, $V$ is parallel and therefore, $f(M^{*})$ is contained in the sphere or  hyperbolic space with center $\tilde{c}$ by \cite{megid}. This proves parts (1) and (2). Furthermore, the condition  $A_{V}=\lambda I$ together with Lemma \ref{lemm} implies that $A_{V}=aA_{\xi}^{*}+bA_{N}=\lambda I$, where $V=a\xi +bN$.  Then, in view of \cite{no}, the null hypersurface $(M,g)$  is quasi-screen conformal in $\mathbb{R}_{1}^{n+2}$. On the other hand, if $A_{W}=0$, we have $aA^{*}_{\xi}-bA_{N}=0$. Combining this relation with the previous one gives $A^{*}_{\xi}=\frac{\lambda}{2a}I$ and $A_{N}=\frac{\lambda}{2b}I$, which shows that $(M,g)$ is totally umbilic and screen totally umbilic in $\mathbb{R}_{1}^{n+2}$, and the theorem is proved.
 	 	 	 \end{proof}
 	 
 	 \begin{corollary}\label{cor}
 	 	In case the vector field $V$, in Theorem \ref{theorem1}, is the mean curvature vector of $M^{*}$ in $\mathbb{R}_{1}^{n+2}$, then $M^{*}$ is immersed minimally in $\mathbb{S}_{1}^{n+1}$ or $\mathbb{H}_{0}^{n+1}$ (Such an immersion is called pseudo umbilic by Chen and Yano \cite{chen}). Moreover, the underlying null hypersurface  $(M,g)$ is a proper totally umbilic,  screen totally umbilic  and screen conformal in $\mathbb{R}_{1}^{n+1}$. 	 	
 	 \end{corollary}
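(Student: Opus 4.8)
The plan is to read the extra hypothesis ``$V$ is the mean curvature vector'' through Lemma~\ref{lemm} and the normal splitting $TM^{*\perp}=\mathrm{span}\{V,W\}$, and then feed the resulting shape-operator data into Theorem~\ref{theorem1} together with the classical Chen--Yano rigidity for pseudo-umbilic immersions \cite{chen}. First I would record what the hypothesis says algebraically. Writing $V=a\xi+bN$ and $W=a\xi-bN$, Lemma~\ref{lemm} gives $A_V=aA_\xi^*+bA_N$ and $A_W=aA_\xi^*-bA_N$. Since $H^*=\tfrac12[(\mathrm{trace}\,A_V)V+(\mathrm{trace}\,A_W)W]$ and we now impose $V=H^*$, comparing the $V$- and $W$-components (they are orthogonal, hence independent) forces $\mathrm{trace}\,A_W=0$; that is, the part of the mean curvature tangent to the ambient sphere vanishes, so $M^*$ is minimal as a hypersurface of that sphere. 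Simultaneously the standing assumption $A_V=\lambda I$ reads $A_{H^*}=\lambda I$, which is exactly the pseudo-umbilic condition of \cite{chen}.

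Next I would upgrade ``contained in a sphere'' (already delivered by parts (1)--(2) of Theorem~\ref{theorem1}) to ``minimally immersed.'' Because $d\tau=0$ on $S(TM)$, Corollary~\ref{b1} gives that the normal bundle $TM^{*\perp}$ is parallel; in particular $V=H^*$ is parallel in $\nabla^{*\perp}$, so $M^*$ carries a nonzero parallel mean curvature vector, with $\overline g(H^*,H^*)=\epsilon$ constant. Combining ``pseudo-umbilic $+$ nonzero parallel mean curvature vector'' via the Chen--Yano theorem then yields that $f(M^*)$ sits \emph{minimally} inside $\mathbb{S}_{1}^{n+1}$ when $\epsilon>0$ and inside $\mathbb{H}_{0}^{n+1}$ when $\epsilon<0$. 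This is the pseudo-umbilic immersion asserted in the statement.

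For the null-hypersurface conclusions I would translate back through Lemma~\ref{lemm}. From $A_V=\lambda I$ one solves $A_N=\tfrac1b(\lambda I-aA_\xi^*)$, so $A_N$ and $A_\xi^*$ are affinely related; this is the (quasi-)screen conformal relation of \cite{no} already recorded in Theorem~\ref{theorem1}. To reach the stronger claim one needs $A_W=0$: for then the computation in the proof of Theorem~\ref{theorem1} gives $A_\xi^*=\tfrac{\lambda}{2a}I$ and $A_N=\tfrac{\lambda}{2b}I$, so $M$ is proper totally umbilic and screen totally umbilic, and moreover $A_N=\tfrac{a}{b}A_\xi^*$ exhibits screen conformality with conformal factor $a/b$. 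Thus, assuming $A_W=0$, all three properties follow formally.

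The main obstacle is precisely this last step: the hypothesis $V=H^*$ only delivers $\mathrm{trace}\,A_W=0$, whereas totally umbilicity of $M$ requires the stronger $A_W=0$, i.e. total geodesy of the minimal leaf inside the sphere. By Corollary~\ref{b1} and Remark~\ref{remark} the operators $A_\xi^*,A_N$ commute and $A_V,A_W$ are simultaneously diagonalizable, and one computes $A_W=2aA_\xi^*-\lambda I$, which is trace-free; so the problem reduces to showing that this trace-free, simultaneously-diagonalizable $A_W$ actually vanishes. Since minimality alone does not force total geodesy of a hypersurface of a sphere, I expect this to be where the real work lies: I would attempt to close it by exploiting the parallelism of $H^*$ together with a Codazzi/Simons-type identity for $A_W$ (or by invoking the Chen--Yano classification of pseudo-umbilic immersions with parallel mean curvature), and I would flag that the argument either needs this additional rigidity input or an auxiliary hypothesis equivalent to $A_W=0$.
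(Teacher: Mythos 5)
Your reconstruction is essentially sound, and in one respect more careful than the paper. For the first assertion the paper argues differently from you: having shown in Theorem \ref{theorem1} that $f(x)-\tilde{c}=-\frac{1}{\lambda}V$, it observes that when $V=H^{*}$ the mean curvature vector is parallel to the position vector $f(x)-\tilde{c}$ and invokes Magid \cite{megid} to conclude minimality in $\mathbb{S}_{1}^{n+1}$ or $\mathbb{H}_{0}^{n+1}$. You instead route through the Chen--Yano rigidity for pseudo-umbilic immersions with parallel mean curvature \cite{chen}, using Corollary \ref{b1} to get that $H^{*}=V$ is $\nabla^{*\perp}$-parallel. Both routes are legitimate, and your observation that $V=H^{*}$ forces $\mathrm{trace}\,A_{W}=0$ is consistent with the paper's framework.

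The gap you flagged in the second assertion is genuine, and you should know that the paper's own proof does not close it either: it simply writes $C(X,Y)\xi+B(X,Y)N=g(X,Y)V$ ``in view of (\ref{t2}) and the fact that $f(M^{*})$ is pseudo umbilic,'' then pairs with $\xi$ and $N$ to obtain $B=b\,g$ and $C=a\,g$, whence $C=\frac{a}{b}B$. But $h^{*}(X,Y)=g(X,Y)V$ is \emph{total} umbilicity of the leaf, which is equivalent to adjoining $A_{W}=0$ to the pseudo-umbilic condition $A_{V}=\lambda I$; pseudo-umbilicity alone controls only the $V$-component of $h^{*}$. As you correctly suspected, the implication fails in general: every minimal, non-totally-geodesic submanifold of a sphere is pseudo-umbilic in the flat ambient space (its shape operator in the position-vector direction is a multiple of the identity) without being totally umbilic, so no Simons-type or Chen--Yano argument can manufacture $A_{W}=0$ from the stated hypotheses. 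The intended reading is evidently that Corollary \ref{cor} inherits the full hypotheses of Theorem \ref{theorem1}, including the ``moreover'' clause $A_{W}=0$; under that reading your third paragraph ($A^{*}_{\xi}=\frac{\lambda}{2a}I$, $A_{N}=\frac{\lambda}{2b}I$, hence $C=\frac{a}{b}B$ with $a,b\neq 0$ giving the proper umbilicity and screen conformality) already completes the proof. In short: your proof of the minimality claim is complete by a different route, and the obstruction you isolated is precisely the step the paper glosses over rather than resolves.
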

 	 \begin{proof}
 	 	If $V$ is the mean curvature vector of $M^{*}$ in $\mathbb{R}_{1}^{n+2}$, then $V$ is parallel to the position vector $f(x)-\tilde{c}$ and therefore, by \cite{megid}, $M^{*}$ is minimal in either $\mathbb{S}_{1}^{n+1}$ or $\mathbb{H}_{0}^{n+1}$. In view of (\ref{t2}) and the fact that $f(M^{*})$ is pseudo umbilic, we have $C(X,Y)\xi +B(X,Y)N=g(X,Y)V=ag(X,Y)\xi+bg(X,Y)N$, for all $X,Y\in \Gamma(TM^{*})$. Taking the $\overline{g}$-product of the previous relation with $\xi$ and $N$, in turns, we get $B(X,Y)=bg(X,Y)$ and $C(X,Y)=ag(X,Y)$, respectively. Thus, $(M,g)$ is totally umbilic and screen totally umbilic in  $\mathbb{R}_{1}^{n+1}$. As $a,b\ne 0$, we deduce that $C(X,Y)=\psi B(X,Y)$, with $\psi=\frac{a}{b}$, showing that $M$ is screen conformal, which completes the proof. 	
 	  \end{proof}
 	 
 	\noindent As an example, we have the following. 
 	 
 	 \begin{example}[The null cone $\Lambda_{0}^{n+1}$ of $\mathbb{R}_{1}^{n+2}$]\label{exam}
 	 \rm{ 
 	 	Let $\mathbb{R}_{1}^{n+2}$ be the space $\mathbb{R}^{n+2}$ endowed with the semi-Euclidean metric
 	 	\begin{align}
 	 		\overline{g}(x,y)=-x^{0}y^{0}+\sum_{a=1}^{n+1}x^{a}y^{a},\;\;\;\mbox{where}\;\;\; x=\sum_{A=0}^{n+1}x^{A}\frac{\partial}{\partial x^{A}}.\nonumber
 	 	\end{align}
 	 	The null cone $\Lambda_{0}^{n+1}$ is given by the equation $-(x^{0})^{2}+\sum_{a=1}^{n+1}(x^{a})^{2}=0$, $x^{0}\ne 0$. It is known that $\Lambda_{0}^{n+1}$ is a null hypersurface of $\mathbb{R}_{1}^{n+2}$ and the radical distribution is spanned by a global vector field 
 	 	\begin{align}
 	 		\xi=\sum_{A=0}^{n+1}x^{A}\frac{\partial}{\partial x^{A}},
  	 	\end{align} 
 	 	on $\Lambda_{0}^{n+1}$. The unique section $N$ spanning the transversal bundle $tr(T\Lambda_{0}^{n+1})$  is given by 
 	 	\begin{align}
 	 		N=\frac{1}{2(x^{0})^{2}}\left\{-x^{0}\frac{\partial}{\partial x^{0}}+\sum_{a=1}^{n+1}x^{a}\frac{\partial}{\partial x^{a}} \right\},
 	 	\end{align}	 	
 	 	and is also globally defined. As $\xi$ is the position vector field we get 
 	 	\begin{align}
 	 		\overline{\nabla}_{X}\xi=\nabla_{X}\xi=X,\;\;\;\forall\, X\in \Gamma(T\Lambda_{0}^{n+1}).
 	 	\end{align} 	 	
 	 	Then, $A^{*}_{\xi}X+\tau(X)\xi+X=0$. Since $A^{*}_{\xi}$ is $\Gamma(S(T\Lambda_{0}^{n+1}))$-valued, we have 
 	 	\begin{align}\label{h3}
 	 		A_{\xi}^{*}X=-PX\;\;\;\mbox{and}\;\;\;\tau(X)=-\eta(X),\;\; \forall\, X\in \Gamma(T\Lambda_{0}^{n+1}).
 	 	\end{align}
 	 	Note that any  $X\in \Gamma(S(T\Lambda_{0}^{n+1}))$ is expressed as $X=\sum_{a=1}^{n+1}X^{a}\frac{\partial}{\partial x^{a}}$, where  $(X^{1},\ldots, X^{n+1})$ satisfy $\sum_{a=1}^{n+1}x^{a}X^{a}=0$. Then, 
 	 	\begin{align}\label{h2}
 	 		\nabla_{\xi}X=\overline{\nabla}_{\xi}X=\sum_{A=0,a=1}^{n+1}x^{A}\frac{\partial X^{a}}{\partial x^{A}}\frac{\partial}{\partial x^{a}},
 	 	\end{align}
 	 	from which we obtain 
 	 	\begin{align}\label{h1}
 	 		g(\nabla_{\xi}X,\xi)=\sum_{A=0,a=1}^{n+1}x^{a}X^{A}\frac{\partial X^{a}}{\partial x^{A}}=-\sum_{a=1}^{n+1}x^{a}X^{a}=0.
 	 	\end{align}
 	 	From (\ref{h2}) and (\ref{h1}), we have $\nabla_{\xi}X\in \Gamma(S(T\Lambda_{0}^{n+1}))$, that is, $A_{N}\xi=0$. Moreover, by simple calculations, we have 
 	 	\begin{align}\label{h4}
 	 		C(X,Y)=\overline{g}(\nabla_{X}Y,N)=\overline{g}(\overline{\nabla}_{X}Y,N)=-\frac{1}{2(x^{0})^{2}}g(X,Y).	 	
 	 	\end{align}
 	 	Clearly, $S(T\Lambda_{0}^{n+1})$ is integrable. Denote by $M^{*}$ its leaf, then 
 	 	\begin{align}\label{t1}
 	 		\overline{\nabla}_{X}Y=\nabla^{*}_{X}Y+\frac{g(X,Y)}{x^{0}}(-\frac{1}{2x^{0}}\xi-x^{0}N),\;\;\;\forall\, X,Y\in \Gamma(TM^{*}). 
 	 	\end{align} 
 	 	It is obvious that $M^{*}$ is a totally umbilic Riemannian submanifold of codimension 2 of $\mathbb{R}_{1}^{n+2}$. Moreover, using (\ref{h3}) and (\ref{h4}), we have 
 	 	\begin{align}
 	 		d\tau(X,Y)=\frac{1}{2}\{C(X,PY)-C(Y,PX)\}=0,\;\;\;\forall\, X,Y\in \Gamma(T\Lambda_{0}^{n+1}). 	 	\end{align}

 	 	 As $x^{0}\ne 0$, we may suppose $x^{0}>0$ (for $x^{0}<0$ we proceed analogously), and consider in the normal bundle $TM^{*\perp}$, the vector fields
 	 	 \begin{align}
 	 	 	V_{1}=-\frac{1}{2x^{0}}\xi-x^{0}N\;\;\;\mbox{and}\;\;\; V_{2}=-\frac{1}{2x^{0}}\xi+x^{0}N.	 	 
 	 	 \end{align}
 	 	 Note that $\{V_{1},V_{2}\}$ is an orthonormal basis, where $V_{1}$ and $V_{2}$ are spacelike and timelike, respectively. Using the expressions of $A_{\xi}^{*}$ and $A_{N}$, we get $A_{V_{1}}=\frac{1}{x^{0}}I$, from which $\lambda=\frac{1}{x^{0}}$. From the expressions of $\xi$ and $N$, we have $\overline{\nabla}_{X}V_{1}=-\frac{1}{x^{{0}}}X$ and $\overline{\nabla}_{X}V_{2}=0$, for all $X\in \Gamma(TM^{*})$. Therefore, from the Weingarten formula (\ref{t3}) for $M^{*}$, we get $\nabla^{*\perp}_{X}V_{1}=0$ and $\nabla^{*\perp}_{X}V_{2}=0$. Clearly, $\{V_{1},V_{2}\}$ is a parallel basis with respect to the normal connection $\nabla^{*\perp}$ of $M^{*}$. As the vector field $V_{1}$ is spacelike, that is, $\overline{g}(V_{1},V_{1})=1$ and also parallel to the mean curvature of $M^{*}$ in $\mathbb{R}_{1}^{n+2}$ (see (\ref{t1})), we conclude by Corollary \ref{cor} that $M^{*}$ is minimally immersed in the sphere $\mathbb{S}_{1}^{n+1}(x^{0})$. Note also that $(M,g)$ is totally umbilic, screen totally umbilic and screen conformal in $\mathbb{R}_{1}^{n+2}$. 	 
 	 }	
 	 \end{example}
 	 
 	 A subbundle $\mathcal{D}$ of the normal bundle  $TM^{*\perp}$ is said to be {\it parallel} in the normal bundle if it is invariant by parallel translation with respect to the normal connection $\nabla^{*\perp}$; that is, if $V\in \Gamma(\mathcal{D})$ then $\nabla_{X}^{*\perp}V\in \Gamma(\mathcal{D})$, for any $X\in \Gamma(TM^{*})$. We also say that the curvature tensor $R^{*\perp}$ of the normal connection $\nabla^{*\perp}$ is {\it parallel} in the normal bundle if $\nabla^{*\perp}R^{*\perp}=0$; that is, for any $X,Y,Z\in \Gamma(TM^{*})$	 and $V\in \Gamma(TM^{*\perp})$ , we have 
 	 \begin{align}\label{u1}
 	 	(\nabla^{*\perp}_{Z}R^{*\perp}(X,Y))V=\nabla^{*\perp}_{Z}R^{*\perp}(X,Y)V-R^{*\perp}(X,Y)\nabla^{*\perp}	_{Z}V=0.
 	  \end{align}
 	  	As an example, the curvature tensor $R^{*\perp}$  of the normal bundle $TM^{*\perp}$ of Example \ref{exam} is parallel in the normal bundle. This is due to the fact that the normal bundle is parallel; that is $\nabla^{*\perp}V=0$, for any $V\in \Gamma(TM^{*\perp})$. 
 	  	
 	  	Next, we define the {\it first normal space} $\mathcal{Q}(x)$ at $x\in M^{*}$ as the orthogonal complement in $T_{x}M^{*\perp}$ of $\{V(x)\in T_{x}M^{*\perp}:A_{V(x)}=0\}$.
 	 \begin{lemma}\label{lemm1}
 	 	Let $(M,g)$ be a screen integrable null hypersurface of $\overline{M}(c)$. Assume that $f:M^{*}\longrightarrow \overline{M}$ is an immersion of a leaf $M^{*}$ of $S(TM)$ as a codimension 2 submanifold of $\overline{M}$. Suppose that the curvature tensor $R^{*\perp}$ of the normal bundle to $M^{*}$ is parallel in the normal bundle. For each $x\in M^{*}$ let $\mathcal{D}(x)=\{V(x)\in T_{x}M^{*\perp}: R^{*\perp}(X,Y)V=0,\forall\, X,Y\}$. Then, $\mathcal{D}$ is parallel in the normal bundle $TM^{*\perp}$. Moreover, $d\tau$ vanishes on $TM^{*}$.
 	 \end{lemma}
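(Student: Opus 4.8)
The plan is to first determine the exact algebraic form of $R^{*\perp}$ on the rank-two normal bundle, then apply the standard principle that the kernel of a parallel curvature tensor is a parallel subbundle, and finally read off $d\tau=0$ from the parallel structure so obtained.

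First I would complete the description of $R^{*\perp}$ begun in Proposition \ref{pop1}. That result gives $R^{*\perp}(X,Y)V=2d\tau(X,Y)W$ for $V=a\xi+bN$ and $W=a\xi-bN$. Because $\nabla^{*\perp}$ is a metric connection, each operator $R^{*\perp}(X,Y)$ is skew-adjoint on $TM^{*\perp}$; since $V,W$ are $\overline{g}$-orthogonal with $\overline{g}(V,V)=-\overline{g}(W,W)$, skew-adjointness forces $R^{*\perp}(X,Y)W=2d\tau(X,Y)V$. Hence $R^{*\perp}(X,Y)=2d\tau(X,Y)\,J$, where $J$ is the fixed metric (boost) involution interchanging $V$ and $W$, with $\nabla^{*\perp}J=0$ because $\nabla^{*\perp}$ is metric. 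In particular $R^{*\perp}(X,Y)$ is invertible whenever $d\tau(X,Y)\neq0$, so the fibre $\mathcal{D}(x)$ is all of $T_{x}M^{*\perp}$ at points where $d\tau$ vanishes and is $\{0\}$ elsewhere; an intermediate rank one is impossible.

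For the parallelism of $\mathcal{D}$ I would take a local section $V\in\Gamma(\mathcal{D})$, so that $R^{*\perp}(X,Y)V=0$ identically for all $X,Y\in\Gamma(TM^{*})$. Differentiating along an arbitrary $Z\in\Gamma(TM^{*})$ and using the hypothesis $\nabla^{*\perp}R^{*\perp}=0$ in the form (\ref{u1}) gives $0=\nabla^{*\perp}_{Z}\big(R^{*\perp}(X,Y)V\big)=R^{*\perp}(X,Y)\nabla^{*\perp}_{Z}V$ for every $X,Y$, so $\nabla^{*\perp}_{Z}V\in\Gamma(\mathcal{D})$. Thus parallel transport in the normal bundle preserves $\mathcal{D}$; being an isomorphism it keeps $\dim\mathcal{D}(x)$ constant along curves, so on the connected leaf $M^{*}$ the subbundle $\mathcal{D}$ has constant rank and is genuinely parallel.

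The vanishing of $d\tau$ is where the two steps meet, and it is the part I expect to be delicate. By the rank dichotomy above, constancy of $\mathrm{rank}\,\mathcal{D}$ means that globally either $\mathcal{D}=\{0\}$ (so that at every point some $d\tau(X,Y)\neq0$) or $\mathcal{D}=TM^{*\perp}$ (so that $R^{*\perp}\equiv0$, i.e. $d\tau\equiv0$). To exclude the first alternative I would observe that $\nabla^{*\perp}R^{*\perp}=0$ together with $\nabla^{*\perp}J=0$ makes the scalar $2$-form $\omega:=2d\tau$ parallel on $M^{*}$, whence $\omega$ is harmonic and of constant pointwise norm; since the leaf is integrable one also has $\omega=d(\tau|_{M^{*}})$, so $\omega$ is exact. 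An exact harmonic form vanishes, giving $\omega=2d\tau=0$ and $\mathcal{D}=TM^{*\perp}$. The genuine obstacle is precisely this last implication: excluding a nowhere-degenerate parallel normal curvature requires that $M^{*}$ carry no nonzero parallel exact $2$-form, which holds for instance when the leaf is compact but can fail on a general noncompact leaf; at this step I would therefore either invoke such a hypothesis on $M^{*}$ or restrict the stated conclusion to the open locus where $\mathcal{D}=TM^{*\perp}$.
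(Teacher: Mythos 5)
Your handling of the first assertion is correct and is in substance the paper's own route: the paper disposes of it by citing \cite{docamo}, and the argument there is exactly your differentiation step --- for a section $V$ of $\mathcal{D}$, hypothesis (\ref{u1}) gives $0=\nabla^{*\perp}_{Z}\bigl(R^{*\perp}(X,Y)V\bigr)=R^{*\perp}(X,Y)\nabla^{*\perp}_{Z}V$, so $\nabla^{*\perp}_{Z}V\in\Gamma(\mathcal{D})$, with your parallel-transport remark supplying the constant-rank point that makes $\mathcal{D}$ an honest subbundle. Your sharpening $R^{*\perp}(X,Y)=2d\tau(X,Y)\,J$ goes beyond Proposition \ref{pop1} (which records only $R^{*\perp}(X,Y)V=2d\tau(X,Y)W$) and is correct; the compressed justification ``$\nabla^{*\perp}J=0$ because $\nabla^{*\perp}$ is metric'' does hold here, but only because in a rank-two bundle with signature $(1,1)$ metric the $\overline{g}$-orthogonal complement of a null vector $\xi$ in the fibre is $\mathbb{R}\xi$ itself, so metricity forces each of the two smooth null line subbundles $TM^{\perp}$ and $tr(TM)$ to be parallel; concretely, (\ref{v13}) and (\ref{v15}) give $\nabla^{*\perp}_{X}\xi=-\tau(X)\xi$ and $\nabla^{*\perp}_{X}N=\tau(X)N$, whence $J$ ($+1$ on $TM^{\perp}$, $-1$ on $tr(TM)$) is parallel.

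On the ``moreover'' clause you have put your finger on a genuine gap, and it is a gap in the paper's proof rather than a defect of your approach. The paper's entire justification is that the vanishing of $d\tau$ ``follows from Proposition \ref{pop1} and the definition of $\mathcal{D}(x)$,'' i.e.\ for $V\in\mathcal{D}$ one has $0=R^{*\perp}(X,Y)V=2d\tau(X,Y)W$, hence $d\tau=0$; this is valid only if $\mathcal{D}$ admits a nonzero vector, which the stated hypotheses do not guarantee. Your rank dichotomy (since $J$ is invertible, $\mathcal{D}(x)$ is $\{0\}$ where $d\tau|_{x}\neq0$ and all of $T_{x}M^{*\perp}$ where $d\tau|_{x}=0$) isolates the missing case as precisely $\mathcal{D}\equiv\{0\}$, i.e.\ a nowhere-vanishing parallel exact $2$-form $2d\tau$ on the leaf, and your flat-plane example ($dx\wedge dy=d(x\,dy)$ on $\mathbb{R}^{2}$) shows this cannot be excluded by any formal argument on a noncompact leaf, so your compactness caveat is exactly right. (Had one read (\ref{u1}) literally, with no derivative terms in the $X,Y$ slots and quantified over all vector fields, then constancy of $d\tau(\phi X,Y)=\phi\,d\tau(X,Y)$ for every function $\phi$ would force $d\tau=0$ outright --- but that reading makes the hypothesis equivalent to $R^{*\perp}=0$ and renders the ``normal bundle not flat'' case of Theorem \ref{theorem2} vacuous, so the tensorial reading you adopted is the intended one.) Note that in the lemma's sole application, Theorem \ref{theorem2}, one has $\mathcal{P}\subset\mathcal{D}$ with $\dim\mathcal{P}=1$ (by (\ref{t6}), since $A_{V}=0$ for $V\in\mathcal{P}$, and non-total geodesy gives $\dim\mathcal{Q}=1$), so there $\mathcal{D}\neq\{0\}$ and the conclusion is restored; the cleanest repair of the lemma as a standalone statement is therefore to add the hypothesis that $\mathcal{D}\neq\{0\}$ at some point (or your compactness assumption), after which either your argument or the paper's one-liner closes the proof.
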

 	 \begin{proof}
 	 	A proof of the first assertion follows similar steps as in \cite{docamo}. The vanishing of $d\tau$ on $TM^{*}$ follows from Proposition \ref{pop1} and the definition of $\mathcal{D}(x)$.
 	 \end{proof}
 	 \noindent Next, we proof the following result.
 	 \begin{theorem}\label{theorem2}
 	 Let $(M,g)$ be an $(n+1)$-dimensional screen integrable null hypersurface of a Lorentzian manifold $\overline{M}(c)$. Assume that $f:M^{*}\longrightarrow \overline{M}$ is a minimal, nontotally geodesic, immersion of a leaf $M^{*}$ of $S(TM)$ as a codimension 2 submanifold of $\overline{M}$.  Suppose that the curvature tensor $R^{*\perp}$ of the normal bundle to $M^{*}$ is parallel in the normal bundle. Then there exists an $(n+1)$-dimensional totally geodesic submanifold $M'$ of $\overline{M}$ such that $f$ is a minimal immersion of $M^{*}$ in $M'$. Furthermore, the underlying null hypersurface $(M,g)$ is minimal in $\overline{M}$ and its  shape operator $A_{N}$ satisfies $\mathrm{trace}_{|_{S(TM)}}A_{N}=0$.
 	 	 \end{theorem}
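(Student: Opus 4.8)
The plan is to treat the soft assertions and the reduction of codimension separately, since they are of quite different flavour. The claims that $(M,g)$ is minimal in $\overline{M}$ and that $\mathrm{trace}_{S(TM)}A_{N}=0$ will fall out almost immediately from the minimality of the leaf, whereas the existence of the totally geodesic $M'$ is the substantive part.

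First I would convert the hypothesis on the normal curvature into a pointwise statement. Since $R^{*\perp}$ is parallel in the normal bundle, Lemma~\ref{lemm1} yields that $d\tau$ vanishes on $TM^{*}=S(TM)$; feeding this into Corollary~\ref{b1} (via Proposition~\ref{pop1}) upgrades it to $R^{*\perp}(x)=0$, to the commutation relation $A_{\xi}^{*}\circ A_{N}=A_{N}\circ A_{\xi}^{*}$, and to the parallelism of the full normal bundle $TM^{*\perp}$. In particular $A_{\xi}^{*}$ and $A_{N}$ are simultaneously diagonalisable and the normal connection $\nabla^{*\perp}$ is flat. Next I would dispose of the minimality and trace statements. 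Choosing an orthonormal normal frame $\{V,W\}$ with $V=a\xi+bN$ and $W=a\xi-bN$, minimality of $f$ means $H^{*}=0$, i.e. $\mathrm{trace}\,A_{V}=\mathrm{trace}\,A_{W}=0$. By Lemma~\ref{lemm} one has $A_{V}=aA_{\xi}^{*}+bA_{N}$ and $A_{W}=aA_{\xi}^{*}-bA_{N}$, so adding and subtracting the two trace equations and using $a,b\neq0$ gives $\mathrm{trace}\,A_{\xi}^{*}=0$ and $\mathrm{trace}\,A_{N}=0$ on $S(TM)$. The former is exactly the condition that $(M,g)$ be minimal in $\overline{M}$, and the latter is the asserted relation; this settles the last two conclusions.

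For the existence of $M'$ I would use the reduction-of-codimension theorem in the same form invoked for Lemma~\ref{lemm1} (see \cite{docamo}): if the first normal space $\mathcal{Q}$ of $f$ is parallel in the normal bundle and has constant dimension $\ell$, then $f(M^{*})$ is contained in a totally geodesic submanifold of $\overline{M}(c)$ of dimension $n+\ell$. Parallelism of $\mathcal{Q}$ I would extract from the Codazzi equation of $f$ together with the flatness $R^{*\perp}=0$ obtained above, following the argument that produces the parallel subbundle in Lemma~\ref{lemm1}. To identify $\ell$, I would work in a common eigenbasis $e_{1},\dots,e_{n}$ of $A_{V}$ and $A_{W}$: then $h^{*}(e_{i},e_{j})=0$ for $i\neq j$ and $h^{*}(e_{i},e_{i})=p_{i}V+q_{i}W$, where $A_{V}e_{i}=p_{i}e_{i}$ and $A_{W}e_{i}=q_{i}e_{i}$, so that $\mathcal{Q}=\mathrm{span}\{p_{i}V+q_{i}W\}$. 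The non-totally-geodesic hypothesis forces $\mathcal{Q}\neq0$, hence $\ell\geq1$; once $\ell=1$ is known the theorem follows, since the reduction produces the $(n+1)$-dimensional totally geodesic $M'$ with $f(M^{*})\subset M'$, and because the second fundamental form of $f$ in $M'$ is precisely the $\mathcal{Q}$-valued form $h^{*}$, the vanishing $H^{*}=0$ makes $f$ minimal in $M'$ as well.

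The hard part will be the dimension count $\ell=1$, and this is where the codimension-two and minimality hypotheses must genuinely interact. The constraints $\sum_{i}p_{i}=\sum_{i}q_{i}=0$ coming from minimality must be leveraged to force the eigenvalue vectors $(p_{i},q_{i})$ to be proportional, equivalently to produce a parallel unit normal field with vanishing shape operator along which the codimension drops by one; for $n=2$ this proportionality is automatic (one has $(p_{2},q_{2})=-(p_{1},q_{1})$), but for higher $n$ it is exactly the delicate point, and it is here that I would expect to need the flatness of $\nabla^{*\perp}$ and the Codazzi equation, rather than the purely algebraic trace relations, to close the argument.
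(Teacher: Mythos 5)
Your route is essentially the paper's own: Lemma~\ref{lemm1} together with Corollary~\ref{b1} converts parallelism of $R^{*\perp}$ into $d\tau=0$ on the leaf, hence flatness of $\nabla^{*\perp}$, parallelism of $TM^{*\perp}$ and simultaneous diagonalizability of all shape operators; the Codazzi-type identity (\ref{t5}) with $R^{*\perp}=0$ then shows that $\mathcal{P}=\{V:A_{V}=0\}$ is parallel; and Magid's Theorem~0.2 \cite{megid} produces $M'$. The mechanism you left vague (killing the diagonal entries) is handled in the paper exactly along the line you guessed: for $V_{2}\in\mathcal{P}$ the tangential Codazzi terms drop out, one gets $A_{\nabla^{*\perp}_{X_{j}}V_{2}}X_{i}=0$ for $i\neq j$ in a common eigenbasis, so this operator has at most one nonzero diagonal entry, and minimality (vanishing trace of every shape operator) annihilates that entry as well. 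Two of your steps genuinely improve on the paper: your observation that parallel $R^{*\perp}$ already forces $R^{*\perp}=0$ via Lemma~\ref{lemm1} and Corollary~\ref{b1} shows that the paper's separate ``second case'' (non-flat normal bundle) is vacuous; and your trace argument ($\mathrm{trace}\,A_{V}=\mathrm{trace}\,A_{W}=0$ plus $A_{V}=aA^{*}_{\xi}+bA_{N}$, $A_{W}=aA^{*}_{\xi}-bA_{N}$ from Lemma~\ref{lemm}) yields $\mathrm{trace}\,A^{*}_{\xi}=\mathrm{trace}\,A_{N}=0$ directly, whereas the paper derives these traces from a vector $V_{2}$ with $A_{V_{2}}=0$ and thereby silently presupposes the dimension count.

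The genuine gap is the one you flagged yourself: you never establish $\ell=\dim\mathcal{Q}=1$. You should know, however, that the paper does not establish it either; its proof opens with the bare assertion that nontotal geodesy gives a first normal space of constant dimension $1$, which is a non sequitur --- nontotal geodesy only gives $\mathcal{Q}\neq 0$ at some point, i.e.\ $\ell\geq 1$, and says nothing about constancy. Moreover, your hope that flatness, Codazzi and minimality force the eigenvalue vectors $(p_{i},q_{i})$ to be proportional for $n\geq 3$ cannot be realized: take a full maximal (spacelike, zero mean curvature) surface $\Sigma_{1}\subset\mathbb{R}^{3}_{1}$ and a full nonplanar minimal surface $\Sigma_{2}\subset\mathbb{R}^{3}$; then $M^{*}=\Sigma_{1}\times\Sigma_{2}\subset\mathbb{R}^{3}_{1}\times\mathbb{R}^{3}=\mathbb{R}^{6}_{1}$ is a spacelike, codimension $2$, minimal, nontotally geodesic submanifold whose normal connection is the flat product connection (so $R^{*\perp}=0$ is trivially parallel), and which is locally a leaf of an integrable screen of a null hypersurface (flow $M^{*}$ along a null normal geodesic congruence); yet $\dim\mathcal{Q}=2$ wherever both factors are nontotally geodesic, and $M^{*}$ lies in no $5$-dimensional totally geodesic submanifold. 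So $\ell=1$ is not merely ``the hard part'': it does not follow from the stated hypotheses at all and must be added as an assumption, as in the codimension-reduction literature \cite{docamo} where constancy of the first normal space's dimension is hypothesized. Your instinct to isolate this step was exactly right; modulo it, your argument matches --- and in the two places noted above, repairs --- the paper's proof.
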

 	 \begin{proof}
 	 First note that; as $f$ is nontotally geodesic, then the first normal space of $f$ has constant dimension $1$. 	 	We first prove the case when the normal bundle $TM^{*\perp}$ is parallel. To that end,  let $\mathcal{Q}(x)$ be the first normal space at $x$. As $\dim \mathcal{Q}$ is constant, $\mathcal{P}=\mathcal{Q}^{\perp}$, where $\perp$ is the orthogonal complement in the normal bundle $TM^{*\perp}$, is a subbundle of the normal bundle. We want to show that $\mathcal{P}$ is parallel in the normal bundle and then, use a result of \cite{megid} to draw conclusions. Given $x\in M^{*}$, choose a unit vector field $V_{1}$, spanning $\mathcal{Q}$ at each point in a neighborhood $\mathscr{U}$ of $x\in M^{*}$. Let us extent the above field to $\{V_{1},V_{2}\}$ so that the latter span the normal space at a point of $\mathscr{U}$.  Consider the vector field $V_{2}$ which generate the subbundle $\mathcal{P}$ over $\mathscr{U}$. It then suffices to show that $\mathcal{P}$ is parallel in the normal bundle $TM^{*\perp}$. 
 	 	
 	 	Given $y\in \mathscr{U}$, let $X_{1},\ldots, X_{n}$ be coordinate vector fields in a neighborhood $\mathscr{U}'\subset \mathscr{U}$ of $y$ which diagonalize, at $y$, all the shape operators of $M^{*}$. This is possible because the normal bundle is flat, and it is a result of Cartan that the normal bundle is flat if and only if at each point all the second fundament forms are simultaneously diagonalizable. As $\overline{M}$ is a space of constant curvature $c$, we have $\overline{R}(X_{j},X_{i})V_{\alpha}=0$, for $1\le i,j\le n$ and $1\le \alpha\le 2$. Then, by (\ref{t5}), we have 
 	 	\begin{align}
 	 	 &-\nabla^{*}_{X_{j}}A_{V_{\alpha}}X_{i}+\nabla_{X_{i}}^{*}A_{V_{\alpha}}X_{j}-A_{\nabla^{*\perp}_{X_{i}}V_{\alpha}}X_{j}+A_{\nabla^{*\perp}_{X_{j}}V_{\alpha}}X_{i}+A_{V_{\alpha}}[X_{j},X_{i}]\nonumber\\
 		&+R^{*\perp}(X_{j},X_{i})V_{\alpha}+	\{C (X_{i},A_{V_{\alpha}}X_{j})-C(X_{j},A_{V_{\alpha}}X_{i})\}\xi \nonumber\\
 		&+\{B(X_{i},A_{V_{\alpha}}X_{j})-B(X_{j},A_{V_{\alpha}}X_{i})\}N=0,\;\;\;\forall\, X_{i},X_{j}\in T_{x}M^{*}.\label{i1}
 	 	\end{align}
 	 	For $V_{\alpha}\in \mathcal{P}$, that is $\alpha=2$, (\ref{i1}) gives 
 	 	\begin{align}
 	 		R^{*\perp}(X_{j},X_{i})V_{2}-A_{\nabla^{*\perp}_{X_{i}}V_{2}}X_{j}+A_{\nabla^{*\perp}_{X_{j}}V_{2}}X_{i}=0\nonumber,
 	 	 	\end{align} 
 	 	 	from which we get 
 	 	 	\begin{align}
 	 	 		-A_{\nabla^{*\perp}_{X_{i}}V_{2}}X_{j}+A_{\nabla^{*\perp}_{X_{j}}V_{2}}X_{i}=0,\label{i2}		 	
 	 	 \end{align}
 	 	 after considering the assumption $R^{*\perp}=0$. Since $X_{i},\ldots,X_{n}$ diagonalize all the shape operators at $y$, we have, at $y$, $A_{\nabla^{*\perp}_{X_{j}}V_{2}}X_{i}=a_{j}^{i}X_{i}$ and $A_{\nabla^{*\perp}_{X_{i}}V_{2}}X_{j}=b_{i}^{j}X_{j}$, for some numbers $a_{j}^{i}$, $b_{i}^{j}$. Therefore, (\ref{i2}) implies that $A_{\nabla^{*\perp}_{X_{j}(y)}V_{2}}X_{i}(y)=0$, for all $i\ne j$. As the immersion is minimal, we have $A_{\nabla^{*\perp}_{X_{j}(y)}V_{2}}X_{i}(y)=0$, for all $i,j$, and therefore, $A_{\nabla^{*\perp}_{X_{j}(y)}V_{2}}=0$ for all $j$. This in turn implies that $\nabla^{*\perp}_{X_{j}(y)}V_{2}\in \mathcal{P}(y)$, following the definition of $\mathcal{P}$.  That is, $\mathcal{P}$ over $\mathscr{U}$ is parallel in the normal bundle. This implies that $\mathcal{Q}$ is also parallel in the normal bundle. In fact, take $V_{2}\in \mathcal{P}$ and $V_{1}\in \mathcal{Q}$. As $\nabla^{*\perp}$ is a metric connection, we have $X\overline{g}(V_{2},V_{1})=\overline{g}(\nabla_{X}^{*\perp}V_{2},V_{1})+\overline{g}(V_{2},\nabla_{X}^{*\perp}V_{1})=0$, from which we see that $\mathcal{Q}$ is also parallel in the normal bundle. By Theorem 0.2 of \cite[p. 33]{megid}, there exist a totally geodesic submanifold $M'$ of $\overline{M}$ such that $\dim M'=n+1$ and $f(M^{*})\subset M'$. 
 	 	 
 	 	 Next,  since $\dim \mathcal{Q}=1$ we have $\dim \mathcal{P}=1$. Let $V_{2}\in \mathcal{P}$, then $A_{V_{2}}=0$ by the definition of $\mathcal{P}$. Thus, Lemma \ref{lemm} gives $aA^{*}_{\xi}+bA_{N}=0$. Taking the trace of this relation along $TM^{*}$, we get $a\, \mathrm{trace}A^{*}_{\xi}+b\,\mathrm{trace}A_{N}=0$. On the other hand, since $M^{*}$ is minimal in $\overline{M}$, we have $(\mathrm{trace}A_{V_{1}})V_{1}+(\mathrm{trace}A_{V_{2}})V_{2}=0$, where $V_{1}\in \mathcal{Q}$. In view of Lemma \ref{lemm}, we have $a\, \mathrm{trace}A^{*}_{\xi}-b\,\mathrm{trace}A_{N}=0$. Therefore, solving gives $\mathrm{trace}A^{*}_{\xi}=0$ and $\mathrm{trace}A_{N}=0$, showing that $(M,g)$ is minimal in $\overline{M}$. This completes the first case of the proof.
 	 	 
 	 	 Turning to the second case, that is; the normal bundle is not flat. Set $\mathcal{D}(x)=\{V(x)\in T_{x}M^{*\perp}: R^{*\perp}(X,Y)V=0,\forall\, X,Y\}$. By Lemma \ref{lemm1},  $\mathcal{D}$ is parallel in the normal bundle. Let $\mathcal{P}$ be the orthogonal complement of $\mathcal{Q}$ in the normal bundle $TM^{*\perp}$.  It is obvious that $\mathcal{P}\subset \mathcal{D}$. Observe that, by Lemma \ref{lemm1}, $\mathcal{D}$ is parallel and , by (\ref{t8}), all the shape operators $A_{V}$, $V\in \mathcal{D}$, can be simultaneously diagonalized. Therefore, we can apply the same arguments as in the first case above, with $\mathcal{D}$ in place of $TM^{*\perp}$, to conclude that $\mathcal{P}$, hence $\mathcal{Q}$, is parallel in the normal bundle. Again, as in the first case, by Megid's theorem 0.2 in \cite{megid}, $f(M^{*})\subset M'$, where $M'$ is a totally geodesic submanifold of $\overline{M}$ with dimension $n+1$. The minimality of $(M,g)$, as a null hypersurface of $\overline{M}$, also follows as in the previous case. This completes the second case and so the theorem is proved.
 	 \end{proof}


\end{document}